\newcommand{\bburl}[1]{\textcolor{blue}{\url{#1}}}
\newcommand{\E}{\mathbb{E}}
\renewcommand{\E}{\mathbb{E}}
\numberwithin{equation}{section}
\newtheorem{thm}{Theorem}[section]
\newtheorem{lem}[thm]{Lemma}
\theoremstyle{plain}
\newtheorem{rek}[thm]{Remark}
\newcommand\be{\begin{equation}}
\newcommand\ee{\end{equation}}
\newcommand\bea{\begin{eqnarray}}
\newcommand\eea{\end{eqnarray}}
\newcommand\bi{\begin{itemize}}
	\newcommand\ei{\end{itemize}}
\newcommand\ben{\begin{enumerate}}
	\newcommand\een{\end{enumerate}}
\newcommand\bc{\begin{center}}
	\newcommand\ec{\end{center}}
\newcommand\ba{\begin{array}}
	\newcommand\ea{\end{array}}
\newcommand{\ncr}[2]{{#1 \choose #2}}
\newcommand{\twocase}[5]{#1 \begin{cases} #2 & \text{{\rm #3}}\\ #4 &\text{{\rm #5}} \end{cases}}
\newcommand{\hr}[1]{\href{#1}{\url{#1}}}
\newcommand*{\reff}[1]{\hyperref[#1]{\ref{#1}}}
\title{When Rooks Miss: Probability through Chess}
\author{Steven J. Miller}
\email{\textcolor{blue}{\href{mailto:sjm1@williams.edu, Steven.Miller.MC.96@aya.yale.edu}{sjm1@williams.edu,Steven.Miller.MC.96@aya.yale.edu}}}
\address{Department of Mathematics and Statistics, Williams College, Williamstown, MA 01267}
\author{Haoyu Sheng}
\email{\textcolor{blue}{\href{mailto:hs9@williams.edu}{hs9@williams.edu}}}
\address{Department of Mathematics and Statistics, Williams College, Williamstown, MA 01267}
\author{Daniel Turek}
\email{\textcolor{blue}{\href{mailto:dbt1@williams.edu}{dbt1@williams.edu}}}
\address{Department of Mathematics and Statistics, Williams College, Williamstown, MA 01267}
\thanks{The first named author was partially supported by NSF Grant DMS1561945. We thank the referee for helpful comments on an earlier draft, and Francisco Nascimento for pointing out an algebra error in the computation of a lower order term.}
\subjclass[2010]{60-01 (primary), 05-01 (secondary)}
\keywords{Chess problems, binomial coefficients, binary indicator variables, linearity of expectation, variances and covariances, Chebyshev's inequality, Stirling's formula}
\date{\today}
\begin{document}

\begin{abstract} A famous (and hard) chess problem asks what is the maximum number of safe squares possible in placing $n$ queens on an $n\times n$ board. We examine related problems from placing $n$ rooks. We prove that as $n\to\infty$, the probability rapidly tends to 1 that the fraction of safe squares from a random placement converges to $1/e^2$. Our interest in the problem is showing how to view the involved algebra to obtain the simple, closed form limiting fraction. In particular, we see the power of many of the key concepts in probability: binary indicator variables, linearity of expectation, variances and covariances, Chebyshev's inequality, and Stirling's formula. \end{abstract}

\maketitle
\tableofcontents

\section{Introduction}

Probability as a subject owes a lot to the theory of games; many of its early results were inspired by attempts to analyze the odds of various configurations occurring. What began in contests involving dice or cards has grown tremendously over the centuries, and almost any subject can be the source of a good problem. In this note we concentrate on an extension of a chess question. There is no dearth of great sources for interesting chess problems (see for example \cite{Pet1, Pet2, Wat}); we study one of them, the $n$ queens problem, with a twist. The only familiarity with chess we need is that queens move (and hence attack) horizontally, vertically and diagonally while rooks move just horizontally and vertically, and instead of the standard $8 \times 8$ board ours is $n \times n$. We assume the reader is familiar with the basics of probability and combinatorics, specifically that $\ncr{n}{k} = n! / k!(n-k)!$ is the number of ways to choose $k$ objects from $n$ when order does not matter.

The standard definition of the problem is the following: \emph{If we place $n$ queens on an $n\times n$ chessboard, what is the maximum number of pawns that can be placed and not attacked by any queen?} The problem is often tweaked to asking not just for the maximum number of pawns, but how many configurations of $n$ queens realize this. We can immediately convert this to probability questions by asking what is the probability a random configuration leaves the maximum number of squares safe, or what is the expected number of safe squares. While it is trivial to write down an approach to solve the first problem (it can be encoded as a simple linear programming problem), the run-time is tremendous and the optimal number of safe squares is known only for small $n$; starting with $n=1$, the known values are \begin{eqnarray} & &	0,\  0,\  0,\  1,\  3,\  5,\  7,\  11,\  18,\  22,\  30,\  36,\  47,\  56,\  72,\  82,\  97,\  111,\  \nonumber\\ & &  132,\ 145,\  170,\  186,\  216,\  240,\  260,\  290,\  324,\  360,\  381,\  420, \dots\end{eqnarray} though for $n \ge 17$ the values stated are merely probably the correct ones (see \cite{LV, OEIS}. We can convert these to the percentage of the board that is safe in annnnnnnnnnnnnnnnnnnnnnn optimal configuration; the percentage starts at zero and is slowly increasing, reaching about 50\% by the time $n=30$; it can trivially be shown to converge to 100\% in the limit.\footnote{Essentially place the $n$ queens in a $\sqrt{n} \times \sqrt{n}$ square in the bottom right corner, which shows that only on the order of $n^{3/2}$ of the $n^2$ squares are attacked. If instead of queens we place rooks, it is not too hard to show that this is optimal; see Problem \#1368 (by Steven J. Miller and Chenyang Sun) of the Spring 2020 issue of the Pi Mu Epsilon Journal.}


We study a variant which not only turns out to be easy to analyze, but also provides an excellent opportunity to see in action many important concepts in probability, including binary indicator variables, expected values, covariances, Chebyshev's inequality, Stirling's formula ($n! = n^n e^{-n} \sqrt{2\pi n} \left(1 + O(1/n)\right)$), partitioning probabilities, and how to handle algebra to obtain clean limiting answers. (Recall big-Oh notation $f(x) = O(g(x))$ means that there is a $C > 0$ such that $|f(x)| \le C g(x)$.) For more on these and related topics, see for example \cite{Mi}.

\ \\

\noindent \emph{\textbf{Rook Problem:} If $n$ rooks are randomly and uniformly placed on an $n\times n$ chessboard, what is the probability that at least one square is safe, what is the number of safe squares, and what is the distribution of the number of safe squares?}

\ \\

It is of course possible that no squares are safe. If we have an $n\times n$ board and want to place $n$ queens, they could attack every square even if there are both rows and columns with no queens. For example, if $n=4$ place the four queens in the corners; no queens are in rows 2 or 3, or columns 2 or 3, but every square is attacked. The situation is very different for rooks, as rooks can only attack horizontally and vertically, while queens also attack diagonally.

For there to be no safe squares, it must be the case that either each row has a rook, or each column has a rook.\footnote{If there are no rooks in the $i$\textsuperscript{th} column and none in the $j$\textsuperscript{th} row, then the square $(i,j)$ is safe. We can bound the probability of a safe square by doubling this probability; to find the exact probability we would then subtract the probability that each row \emph{and} each column has a rook.} Each rook is in a different column with probability $n^n / (n! \ncr{n^2}{n})$. To see this, we argue by labeling the rooks 1 through $n$, so order matters. There are $n$ choices for the row of each rook, giving us $n^n$ configurations that have a rook in each column. There are $n^2$ squares to place the first rook, $n^2 - 1$ for the second, and so on down to $n^2 - (n-1)$ for the $n$\textsuperscript{th} rook; the product of this is $n! \ncr{n^2}{n}$. As $n! \ncr{n^2}{n} \le (n^2)^n$, we have \be \frac{n^n}{n! \ncr{n^2}{n}} \ \le \ \frac{n^n}{n^{2n}} \  = \ \frac{1}{n^{n}}, \ee which rapidly goes to zero with $n$. Thus the configurations where each row or each column has a rook happen with vanishingly small probability. In particular, this means that with probability quickly approaching 1 we have at least one square safe.


Thus the first part of our question is not too interesting; the second, however, is more involved and it is here that good mathematics and statistics enter.

\ \\

\begin{thm} As $n \to \infty$, the expected number of safe squares in the Rook Problem converges to $n^2/e^2$, so the percentage of safe squares converges to approximately 13.53\%.
\end{thm}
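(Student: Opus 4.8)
The plan is to write the number of safe squares as a sum of binary indicator variables and apply linearity of expectation. For each square $(i,j)$ with $1 \le i,j \le n$, let $X_{ij}$ equal $1$ if square $(i,j)$ is safe and $0$ otherwise, so the total number of safe squares is $S = \sum_{i=1}^n \sum_{j=1}^n X_{ij}$. Linearity of expectation gives $\E[S] = \sum_{i,j} \E[X_{ij}] = \sum_{i,j} \prob(X_{ij}=1)$, and the key observation is that this probability is the same for every square. Indeed, $(i,j)$ is safe exactly when no rook lies in row $i$ or column $j$, and the union of a row and a column always contains $2n-1$ squares, since the two lines meet only in $(i,j)$. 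Hence for every square the number of positions avoiding both its row and its column equals $n^2 - (2n-1) = (n-1)^2$, all $n^2$ terms are equal, and $\E[S] = n^2\, p$ where $p$ is the common probability that a fixed square is safe.

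Next I would compute $p$ exactly. Because the $n$ rooks occupy a uniformly random $n$-element subset of the $n^2$ squares, and a fixed square is safe precisely when all $n$ rooks fall among the $(n-1)^2$ squares avoiding its row and column, we obtain
\be
p \ = \ \frac{\ncr{(n-1)^2}{n}}{\ncr{n^2}{n}}.
\ee
Writing each binomial coefficient as a falling product and cancelling the common factor of $n!$ turns this into a product of $n$ factors,
\be
p \ = \ \prod_{k=0}^{n-1} \frac{(n-1)^2 - k}{n^2 - k} \ = \ \prod_{k=0}^{n-1} \left(1 - \frac{2n-1}{n^2-k}\right).
\ee

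The main work, and the step I expect to be the genuine obstacle, is showing $p \to 1/e^2$. I would take logarithms and use $\log(1-x) = -x + O(x^2)$, legitimate here because $\frac{2n-1}{n^2-k} = O(1/n)$ uniformly for $0 \le k \le n-1$. The contribution of the $O(x^2)$ terms is $O(1/n^2)$ per factor, hence $O(1/n)$ in total and negligible, while the main sum $\sum_{k=0}^{n-1} \frac{2n-1}{n^2-k}$ is trapped between $\frac{n(2n-1)}{n^2}$ and $\frac{n(2n-1)}{n^2-n+1}$, both of which tend to $2$. Therefore $\log p \to -2$, so $p \to e^{-2}$ and $\E[S] = n^2 p \to n^2/e^2$, giving a limiting safe fraction of $1/e^2 \approx 0.1353$. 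An alternative I would keep in reserve is to apply Stirling's formula directly to the four factorials appearing in $p$; this is messier but exposes the lower-order correction terms explicitly, which is the natural route if one wants the rate of convergence rather than merely the limit.
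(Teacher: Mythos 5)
Your proposal is correct, and its skeleton---indicator variables $X_{ij}$, linearity of expectation, and the exact probability $p = \ncr{(n-1)^2}{n}/\ncr{n^2}{n}$ obtained from the $2n-1$ squares in the union of a row and column---is exactly the paper's. Where you genuinely diverge is the analytic core. The paper isolates a reusable lemma (for any positive integer $a$ and integer $b$, $\ncr{n^2-an-b}{n}/\ncr{n^2}{n} \to e^{-a}$), proved by expanding the ratio as a product of $an+b$ factors, $\prod_{k=0}^{an+b-1}\left(1-\frac{n}{n^2-k}\right)$, one per \emph{forbidden square}, and then squeezing between $\left(1-\frac1n\right)^{an+b}$ and a lower bound whose correction factor is shown to tend to $1$---deliberately avoiding logarithms. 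You instead cancel the $n!$'s to get a product of $n$ factors, one per \emph{rook}, $\prod_{k=0}^{n-1}\left(1-\frac{2n-1}{n^2-k}\right)$, and take logarithms using $\log(1-x) = -x + O(x^2)$; your uniform bound $x = O(1/n)$ and the trapping of the main sum between $\frac{n(2n-1)}{n^2}$ and $\frac{n(2n-1)}{n^2-n+1}$ are both valid, so $\log p \to -2$ is sound. Your route is shorter and automatically delivers a rate of convergence, $\log p = -2 + O(1/n)$; indeed the paper remarks in a footnote that one ``could be more careful, and take logarithms, analyze and then exponentiate, but there is no need.'' What the paper's formulation buys is reuse: the same lemma, with $(a,b) = (3,-2)$ and $(4,-4)$, later evaluates the covariance ratios $\ncr{(n-1)(n-2)}{n}/\ncr{n^2}{n}$ and $\ncr{(n-2)^2}{n}/\ncr{n^2}{n}$ in the variance computation, whereas your mean-specific product would have to be redone for each case (though your logarithmic method generalizes to those cases with no difficulty).
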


\ \\

The idea of the proof is the following. Let $\mathcal{B}$ be a board configuration (i.e., it is the location of $n$ squares on an $n\times n$ board where the rooks are placed), and consider the binary indicator variables \be \twocase{X_{ij}(\mathcal{B}) \ = \ }{1}{if square $(i,j)$ is safe under $\mathcal{B}$}{0}{otherwise.} \ee Each configuration $\mathcal{B}$ occurs with probability $1/\ncr{n^2}{n}$, as we must choose $n$ of the $n^2$ squares for our $n$ rooks.

We compute \be \nu_n \ = \ \sum_{i,j=1}^n \E[X_{ij}(\mathcal{B})], \ \ \ \sigma_n^2 \ = \ \sum_{ij=1}^n \E[X_{ij}(\mathcal{B}) - \nu_n)^2]; \ee note $\nu_n/n^2$ is the average fraction of safe squares. We then use Chebyshev's inequality to show that the number of safe squares for a random $\mathcal{B}$ is concentrated tightly near the mean, answering our second question. Unfortunately the $X_{ij}$ are not independent, and thus we cannot use the variance of a sum is the sum of the variance. Instead, we expand by using covariances.

In \S\ref{sec:combprelim} we isolate and prove a general combinatorial lemma on ratios of binomial coefficients which is used in calculating the mean and the standard deviation. Doing so allows us to do the computation once, and then we just need to change the values of the parameters for the two cases. We go through the argument in complete detail; this is a major motivation of this paper, as one of our goals is to provide commentary on a very difficult part of mathematics: how does one do the algebra to obtain a nice, simple closed form expression? Such simplifications are not always possible, but when they are there is tremendous value, as we now have exact answers and do not need to resort to simulations. After this, in \S\ref{sec:mainproof} we prove our main result about the limiting percentage of safe squares. We also look at our main results using simulation.

\section{Combinatorial Preliminaries}\label{sec:combprelim}

Our key ingredient in the analysis is the following combinatorial lemma involving ratios of binomial coefficients, which we use to calculate the probability of certain rook configurations. We have $n^2$ squares initially available for the rooks, and will see later as we go through all the cases that there are always integers $a$ and $b$ such that the number of squares we lose is $an+b$.

\begin{lem}\label{lem:keybinomialratio} For any positive integer $a$ and any integer $b$,
\be \lim_{n\rightarrow \infty} \frac{\ncr{n^2 - an -b}{n}}{\ncr{n^2}{n}}\ =\ \frac{1}{e^a}.\ee
\end{lem}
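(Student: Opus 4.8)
The plan is to reduce the ratio of binomial coefficients to an explicit product and then extract its limit by taking logarithms. Writing both binomial coefficients in terms of factorials, the factors of $n!$ cancel and the ratio telescopes into
\be
\frac{\ncr{n^2-an-b}{n}}{\ncr{n^2}{n}} \ = \ \frac{(n^2-an-b)!\,(n^2-n)!}{(n^2)!\,(n^2-an-b-n)!} \ = \ \prod_{k=0}^{n-1} \frac{n^2-an-b-k}{n^2-k}.
\ee
So the whole problem becomes understanding a product of $n$ factors, each of which is close to $1$ since the numerator and denominator differ only by $an+b$, a quantity of lower order than $n^2$.

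First I would rewrite each factor as $1 - \frac{an+b}{n^2-k}$ and take the logarithm of the product, turning it into the sum $\sum_{k=0}^{n-1} \log\!\left(1 - \frac{an+b}{n^2-k}\right)$. For $k$ ranging from $0$ to $n-1$ the denominator $n^2-k$ stays comfortably of size $n^2$, so each argument $\frac{an+b}{n^2-k}$ is on the order of $1/n$, hence small for large $n$. Using the Taylor expansion $\log(1-x) = -x + O(x^2)$, I would approximate each summand by $-\frac{an+b}{n^2-k}$ and bound the aggregate error from the $O(x^2)$ terms: there are $n$ of them, each of size $O(1/n^2)$, so their total contribution is $O(1/n)$ and vanishes in the limit.

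The main term is then $-\sum_{k=0}^{n-1}\frac{an+b}{n^2-k}$, and the heart of the computation is showing this converges to $-a$. I would factor out and approximate the denominator: since $0 \le k \le n-1$ we have $n^2-k = n^2(1+O(1/n))$, so
\be
\sum_{k=0}^{n-1} \frac{an+b}{n^2-k} \ = \ \sum_{k=0}^{n-1} \frac{an+b}{n^2}\left(1 + O(1/n)\right) \ = \ n \cdot \frac{an+b}{n^2} + O(1/n) \ = \ a + \frac{b}{n} + O(1/n).
\ee
As $n\to\infty$ this tends to $a$, so the logarithm of the ratio tends to $-a$ and the ratio itself tends to $e^{-a} = 1/e^a$, as claimed.

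The step I expect to require the most care is controlling the error terms uniformly rather than waving them away, since the sum of $n$ small errors must be shown to stay bounded by something that still vanishes. Concretely, I would verify that the bound $\left|\frac{an+b}{n^2-k}\right| \le \frac{C}{n}$ holds uniformly in $k$ for $k \le n-1$ (which needs $n$ large enough that $an+b$ is positive and $n^2-k$ is bounded below by, say, $n^2/2$), so that both the quadratic Taylor remainder and the $O(1/n)$ correction in the denominator accumulate to at most $O(1/n)$ over the $n$ summands. Once that uniform control is in place the rest is routine, and the clean answer $1/e^a$ emerges precisely because the number of factors ($n$) exactly balances the size of each deviation ($a/n$), which is the kind of algebraic cancellation the paper is emphasizing. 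Note the parameter $b$ drops out in the limit, consistent with the statement holding for any integer $b$.
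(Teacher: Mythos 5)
Your proof is correct, but it takes a genuinely different route from the paper's. Both arguments begin with the same factorial cancellation, yet you group the resulting telescoping product as $n$ factors, $\prod_{k=0}^{n-1}\left(1-\frac{an+b}{n^2-k}\right)$, each deviating from $1$ by roughly $a/n$, whereas the paper groups the very same quantity as $an+b$ factors, $\prod_{k=0}^{an+b-1}\left(1-\frac{n}{n^2-k}\right)$, each deviating by roughly $1/n$. More substantively, you then take logarithms and invoke the uniform expansion $\log(1-x)=-x+O(x^2)$, summing the main terms to $-a+O(1/n)$ and the remainders to $O(1/n)$; the paper deliberately avoids logarithms (a footnote notes one could ``take logarithms, analyze and then exponentiate, but there is no need'') and instead squeezes the product between the elementary bounds $\left(1-\frac{1}{n}-\frac{an+b-1}{n^3-an^2-(b-1)n}\right)^{an+b}$ and $\left(1-\frac{1}{n}\right)^{an+b}$, reducing everything to the limit definition of the exponential. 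You correctly identify and handle the one point requiring care, the uniformity of the error control: for $0\le k\le n-1$ one has $n^2-k\ge n^2-n+1$, so each argument is $O(1/n)$ uniformly in $k$ and the $n$ quadratic remainders accumulate to only $O(1/n)$ (you also rightly note $n$ must be large enough that $an+b>0$ when $b<0$). As for what each approach buys: the paper's squeeze argument is more elementary and serves its pedagogical aim of exhibiting the limit $\left(1+\frac{x}{n}\right)^n\to e^x$ directly; your logarithmic route is more systematic, makes it transparent why $b$ drops out, and yields a quantitative rate essentially for free, since your computation shows the ratio equals $e^{-a}\left(1+O(1/n)\right)$, which is slightly stronger than the bare limit the squeeze argument states and would be convenient in the paper's subsequent variance estimates.
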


\begin{proof} We have
\begin{align}
    \frac{\ncr{n^2 - an -b}{n}}{\ncr{n^2}{n}} &\ = \  \frac{(n^2 - an - b)!}{n!(n^2 - (a+1)n - b)!}  \cdot  \frac{n!(n^2-n)!}{(n^2)!} \nonumber\\
    &\ = \  \frac{(n^2 - an - b)!}{(n^2 - (a+1)n - b)!}  \cdot  \frac{(n^2-n)!}{(n^2)!} \nonumber\\
    &\ = \  \frac{(n^2 - n)(n^2 - n - 1) \cdots (n^2 - (a+1)n - (b-1))}{n^2 (n^2 - 1) \cdots  (n^2 - an - (b-1))} \nonumber\\
    &\ = \ \prod_{k=0}^{an+b-1}\frac{n^2 - n - k}{n^2 - k} \ = \  \prod_{k=0}^{an+b-1}\left(1-\frac{n}{n^2 - k}\right).
\end{align}

The challenge now is to manipulate the product so that we can obtain a nice limit as $n\to\infty$. Notice we have on the order of $n$ terms, each within a fixed multiple of $1/n$ from 1. Thus we should be thinking that the answer is related to the exponential function, as \be e^x \ = \ \lim_{n\to\infty} \left(1 + \frac{x}{n}\right)^n.\ee This \emph{suggests} how we should attack the algebra; we want to view each term $1 - n/(n^2-k)$ as $1 - 1/n$ plus a \emph{very} small correction, a correction significantly smaller than $1/n$ and so small that it will not contribute as $n$ tends to infinity, even when raised to the $n$\textsuperscript{th} power. Once we realize this, the following expansion is forced upon us:
\begin{align}
    \frac{n}{n^2 - k} &\ = \  \frac{1}{n} + \frac{k}{n^3 -n k}.
\end{align}

We can often get a good feeling for an expression by looking at extreme cases. Since $k$ is between 0 and $an+b-1$, we have
\begin{align}\label{eq:firstupperlowerbound}
   \left(1-\frac{1}{n} - \frac{an+b-1}{n^3 - an^2 - (b-1)n}\right)^{an+b} \ \leq\ \prod_{k=0}^{an+b-1}\left(1-\frac{n}{n^2 - k}\right) \ \le \  \left(1-\frac{1}{n}\right)^{an+b}.
\end{align}
Since $a$ and $b$ are fixed integers, the far right is easily analyzed:
\begin{align}
  \lim_{n\rightarrow \infty}\left(1-\frac{1}{n}\right)^{an+b} &\ = \  \left(\lim_{n\rightarrow\infty} \left(1-\frac{1}{n}\right)^{n}\right)^a  \cdot  \left(\lim_{n\rightarrow \infty} \left(1-\frac{1}{n}\right)\right)^b \ = \ e^a.
\end{align}

Similar to above, we have a sense of the size of the factor on the far left: $1 - 1/n$. We pull this factor out, as its behavior to the power $an+b$ is the same as our analysis of the upper bound. If we can just show the remaining factor is extremely close to $1$ then the upper and lower bounds for our product will be close, and we can then take limits as $n\to\infty$. Since
\begin{align}
    1-\frac{1}{n} - \frac{an+b-1}{n^3 - an^2 - (b-1)n} &\ = \  \left(1-\frac{1}{n}\right)\left(1 - \frac{an+b-1}{n^3 - an^2 - (b-1)n} \cdot \frac{n}{n-1}\right)\nonumber\\
    &\ = \  \left(1-\frac{1}{n}\right)\left(1 - \frac{an+b-1}{n^2 - an -
    (b-1)} \cdot \frac{1}{n-1}\right),
\end{align}
substituting this into the left of \eqref{eq:firstupperlowerbound} yields
\begin{align}
   \left(1-\frac{1}{n}\right)^{an+b}\left(1 - \frac{an+b-1}{n^2 - an -
    (b-1)} \cdot \frac{1}{n-1}\right)^{an+b}  \ \le \   \prod_{k=0}^{an+b-1}\left(1-\frac{n}{n^2 - k}\right) \ \le \  \left(1-\frac{1}{n}\right)^{an+b}.
\end{align}

We only need to find the limit for $\left(1 - \frac{an+b-1}{n^2 - an - (b-1)} \cdot \frac{1}{n-1}\right)^{an}$ as $n\rightarrow \infty$ to find the limit for the left. As we only care about its limit as $n\to\infty$, we can be a bit crude in approximations in finding upper and lower bounds, so long as they have the same limit.

A good upper bound is trivial:
\begin{align}
   \lim_{n\rightarrow \infty}\left(1 - \frac{an+b-1}{n^2 - an -
    (b-1)} \cdot \frac{1}{n-1}\right)^{an} \ \le \  \lim_{n\rightarrow \infty} 1^{an}\ =\ 1.
\end{align}

We thus want a lower bound that tends to 1 as $n\to\infty$. For large $n$ the factor is essentially $1 - a/n^2$, and thus when we raise this to the $n$\textsuperscript{th} power it should tend to 1 (from the definition of the exponential function, we would need to raise it to something on the order of $n^2$ to have a limit that is not 1). Thus we can be very crude in our bounding.\footnote{We could be more careful, and take logarithms, analyze and then exponentiate, but there is no need.} Fix any $c > 0$. Since $a$ is a positive integer, as $n\rightarrow \infty$ we have $\frac{an+b-1}{n^2 - an - (b-1)} \leq \frac{c}{n}$. Thus
\begin{align}
    \lim_{n\rightarrow \infty}\left(1 - \frac{an+b-1}{n^2 - an -
    (b-1)} \cdot \frac{1}{n-1}\right)^{an}\ > \ \lim_{n\rightarrow \infty} \left(1- \frac{c}{n}\right)^{an} \ = \ \left(\lim_{n\to\infty}\left(1 - \frac{c}{n}\right)^n\right)^a \ = \ e^{-ca}.
\end{align}
As this is true for any $c>0$, taking the limit as $c$ tends to zero from above we obtain a lower bound of $e^0 = 1$. Since the upper and lower bounds are equal, by the Squeeze Theorem we obtain
\begin{align}
   \lim_{n\rightarrow \infty}\left(1 - \frac{an+b-1}{n^2 - an -
    (b-1)} \cdot \frac{1}{n-1}\right)^{an}\ =\ 1,
\end{align} and therefore
\be \lim_{n\rightarrow \infty} \frac{\ncr{n^2 - an -b}{n}}{\ncr{n^2}{n}} = \lim_{n\rightarrow \infty}\prod_{k=0}^{an+b-1}\left(1-\frac{n}{n^2 - k}\right) = \frac{1}{e^a}, \ee
completing the proof. \end{proof}

\section{Proof of Limiting Percentage}\label{sec:mainproof}

Let $S_n(\mathcal{B})$ be the number of safe spaces on an $n\times n$ board when the $n$ rooks are in configuration $\mathcal{B}$ and thus $S_n(\mathcal{B})/n^2$ is the fraction of squares that are safe; therefore \be S_n(\mathcal{B}) \ = \ \sum_{i,j=1}^n X_{ij}(\mathcal{B}). \ee As the $X_{ij}$ are Bernoulli random variables, to calculate their expected values we just need to know the probability it takes on the value 1 (or the value 0). For a square $(i,j)$ to be safe we cannot place a rook in row $i$ or in column $j$; there are $2n-1$ squares in the union of row $i$ and column $j$, and thus the $n$ rooks must be placed in the $n^2 - (2n-1) = (n-1)^2$ remaining spaces on the board. Thus the probability that $(i,j)$ is safe is $\ncr{(n-1)^2}{n} / \ncr{n^2}{n}$, and therefore \bea \E[X_{ij}]  \ = \ 1 \cdot {\rm Prob}(X_{ij} = 1) + 0 \cdot {\rm Prob}(X_{ij} = 0) \ = \ \frac{\ncr{(n-1)^2}{n}}{\ncr{n^2}{n}} \ =: \ \mu_n.\eea

\subsection{Determining the Mean}

The mean of $S_n$ is easily determined (and from that we can get the percentage of safe squares by dividing by $n^2$). By linearity of expectation, and all the random variables being identically distributed,
\be \E[S_n]\ =\ \sum_{i,j=1}^n \E[X_{ij}] \ = \ n^2 \E[X_{11}] \ = \ n^2 \frac{\ncr{(n-1)^2}{n}}{\ncr{n^2}{n}}, \ee and therefore the expected percentage of the $n\times n$ square that is safe is \be \E[S_n / n^2] \ = \  \frac{\ncr{(n-1)^2}{n}}{\ncr{n^2}{n}} \ = \ \mu_n. \ee

We now use Lemma \ref{lem:keybinomialratio}; as $(n-1)^2 = n^2 - 2n + 1$ we have $a=2$ and $b=-1$, and thus \be \mu \ := \ \lim_{n\to\infty} \mu_n \ = \ \lim_{n\to\infty} \E[S_n / n^2] \ = \  \lim_{n\rightarrow \infty} \frac{\ncr{n^2-2n+1}{n}}{\ncr{n^2}{n}}\ =\ \frac{1}{e^2} \ \approx \ 13.53\%. \ee

It is remarkable that all the binomial ratios interact beautifully and a clean, simple closed form solution is available for the limiting percentage of safe squares when we randomly and uniformly place $n$ rooks on an $n\times n$ board: $\mu = 1/e^2$.

We study this limiting percentage through simulation, and indeed find this limiting percentage of approximately 13.53\% (Figure \ref{fig:percentage_free}).

\begin{figure}[h]
\centering
\includegraphics[scale=0.6]{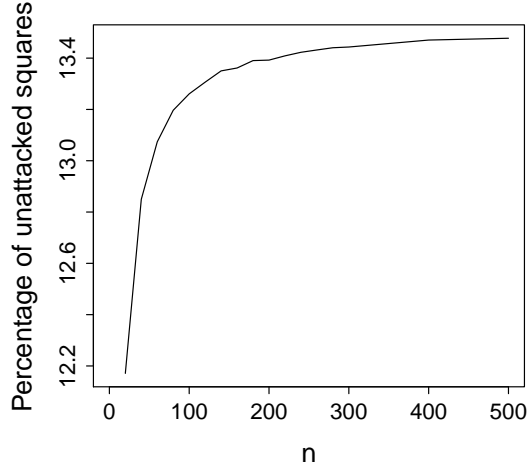}
\caption{Mean percentage of unattacked squares, on an $n \times n$ board.}
\label{fig:percentage_free}
\end{figure}

\subsection{Determining the Variance}

We now turn to the next part: how does the percentage of safe squares vary around $1/e^2$? To determine the answer, we need to compute the variance of $S_n/n^2$ or $S_n$:
\begin{align}\label{eq:varcovarexpansion}
    \text{Var}(S_n) &\ = \  \sum_{i,j = 1}^n \text{Var}(X_{ij}) + \sum_{i,j,\ell,m=1 \atop (i,j) \neq (\ell,m)}^n\text{Cov}(X_{ij}, X_{\ell m}).
\end{align}

We analyze the two pieces separately.

\subsubsection{The Variance Term}

From the definition of variance,
\begin{align}
   \text{Var}\left(X_{ij}\right) &\ = \  \E[X_{ij}^2] - \E[X_{ij}]^2.
\end{align} As the $X_{ij}$ are binary indicator variables, $\E[X_{ij}^2] = \E[X_{ij}]$ and thus
\begin{align}
   \text{Var}\left(X_{ij}\right) &\ = \  \E[X_{ij}] - \E[X_{ij}]^2 \  = \ \mu_n - \mu_n^2.
\end{align}  Thus the contribution of these terms to the variance of $S_n/n^2$ is \be \frac1{n^2} \sum_{i,j = 1}^n \text{Var}(X_{ij}) \ = \ \frac1{n^2} \cdot n^2 (\mu_n - \mu_n^2) \ = \ \mu_n - \mu_n^2,\ee which in the limit as $n\to\infty$ is just \be \mu - \mu^2 \ = \ \frac1{e^2} - \frac1{e^4}. \ee

\subsubsection{The Covariance Term}

Breaking down the covariances, we have
\begin{align}
    \text{Cov}\left(X_{ij}, X_{\ell m}\right) &\ = \  \E[\left(X_{ij} - \mu_{ij}\right)\left(X_{\ell m} - \mu_{\ell m}\right)]\nonumber\\
    &\ = \  \E[X_{ij}X_{\ell m}] - \E[\mu_{\ell m}X_{ij}] - \E[\mu_{ij}X_{\ell m}] + \mu_{ij}\mu_{\ell m}.
\end{align}

As the random variables are identically distributed and all have expected value $\mu_n$,
\begin{align}
    \text{Cov}\left(X_{ij}, X_{\ell m}\right) &\ = \  \E[X_{ij}X_{\ell m}] - \mu_n^2.
\end{align}

As our variables are binary, note $X_{ij}X_{\ell m}(b)$ can be understood as
\be
\twocase{X_{ij}X_{\ell m}(\mathcal{B})\ = \ }{1}{if position $(i, j)$ and position $(\ell,m)$ are safe in board configuration $\mathcal{B}$}{0}{otherwise.} \ee

There are thus two cases to consider for the calculation of $\E[X_{ij}X_{lm}]$.

\ \\

\begin{lem}\label{lem:pairsoverlap} Assume the pairs $(i,j)$ and $(\ell, m)$ share either the same row or the same column (they cannot share both as they are distinct points, since $(i,j) \neq (\ell, m)$). Then \be \text{Prob}\left(X_{ij}X_{lm} = 1\right)\ =\ \E[X_{ij} X_{\ell m}] \ = \ \frac{\ncr{(n-1)(n-2)}{n}}{\ncr{n^2}{n}}, \ee and the number of such pairs is $2(n-1)n^2$. \end{lem}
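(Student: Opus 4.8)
The plan is to count, in each of the two symmetric cases, the set of squares that must be free of rooks in order for both $(i,j)$ and $(\ell,m)$ to be safe, and then to reduce the probability to a single binomial ratio exactly as was done for the mean. First I would recall that $(i,j)$ is safe precisely when no rook lies in row $i$ or column $j$, so for the product $X_{ij}X_{\ell m}$ to equal $1$ we need all of row $i$, column $j$, row $\ell$, and column $m$ to be rook-free. Since every configuration $\mathcal{B}$ is equally likely with probability $1/\ncr{n^2}{n}$, the expectation equals the probability that both squares are safe, which in turn is $\ncr{N}{n}/\ncr{n^2}{n}$, where $N$ is the number of squares lying outside this forbidden region; the whole task is therefore to show $N = (n-1)(n-2)$.

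Next I would treat the case in which the two squares share a row, so $i = \ell$ and $j \neq m$. Here the forbidden region is the union of the single row $i$ with the two distinct columns $j$ and $m$. I would apply inclusion--exclusion: each of the three lines contains $n$ squares, the row meets each column in exactly one point (namely $(i,j)$ and $(i,m)$), and the two columns are disjoint from one another. Hence the size of the union is $3n - 1 - 1 = 3n-2$, leaving $N = n^2 - (3n-2) = n^2 - 3n + 2 = (n-1)(n-2)$ free squares, which yields the claimed ratio. The case in which the two squares share a column is identical after interchanging the roles of rows and columns, giving the same count and hence the same probability.

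Finally I would count the pairs. Because the covariance sum in \eqref{eq:varcovarexpansion} ranges over ordered quadruples $(i,j,\ell,m)$ with $(i,j)\neq(\ell,m)$, I count ordered pairs: for a shared row there are $n$ choices of the common row, $n$ choices for $j$, and $n-1$ choices for $m\neq j$, giving $n^2(n-1)$ such pairs, and symmetrically $n^2(n-1)$ pairs share a column; since a pair of distinct squares cannot share both, these cases are disjoint and the total is $2(n-1)n^2$.

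The step I expect to require the most care is the inclusion--exclusion bookkeeping: the naive count $3n$ for three lines of $n$ squares each overcounts the two intersection points where the shared row crosses the two columns, so the correct size of the forbidden region is $3n-2$ rather than $3n$. Keeping precise track of which intersections occur (and that the two parallel lines do not meet) is exactly what produces the clean factorization $(n-1)(n-2)$; an off-by-a-constant error here would propagate into the later variance computation and spoil the limiting answer.
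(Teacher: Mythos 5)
Your proposal is correct and takes essentially the same route as the paper: both reduce $\E[X_{ij}X_{\ell m}]$ to the ratio $\ncr{(n-1)(n-2)}{n}/\ncr{n^2}{n}$ by counting the squares off-limits to rooks when the two squares share a row (or column), and both count $2(n-1)n^2$ ordered pairs (your $n\cdot n\cdot(n-1)$ per case equals the paper's $n\cdot\ncr{n}{2}\cdot 2!$). The only cosmetic difference is that the paper reads off the free region directly as an $(n-1)\times(n-2)$ grid of surviving rows and columns, while you obtain $n^2-(3n-2)=(n-1)(n-2)$ by inclusion--exclusion; the counts are identical.
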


\begin{proof}
If the two pairs share the same row, there are $\ncr{n}{1}$ ways to choose the row and $\ncr{n}{2}$ ways to choose the column. There are $2!$ ways to assign the columns to $(i,j)$ and $(\ell, m)$, and thus there are $2 \cdot n \cdot n(n-1)/2 = n^2(n-1)$ such pairs. There are the same number of pairs that share a column but not a row, and thus the number of pairs in this case is $2(n-1)n^2$.

What is the probability that $X_{ij} X_{\ell m}(\mathcal{B})$ equals 1 for such a pair? For convenience, let's assume they share a row (so $i=\ell$). We need to place $n$ rooks, none can be in row $i$, and none can be in column $j$ or $m$. We thus have $(n-1) \cdot (n-2)$ squares on the board where we may place the rooks, and the probability all $n$ rooks are in these squares is $\ncr{(n-1)(n-2)}{n} / \ncr{n^2}{n}$. Thus \be \E[X_{ij} X_{\ell m}] \ = \ \frac{\ncr{(n-1)(n-2)}{n}}{\ncr{n^2}{n}}, \ee as claimed, and the contribution from all these pairs is \be 2(n-1)n^2 \frac{\ncr{(n-1)(n-2)}{n}}{\ncr{n^2}{n}} \ = \ 2(n-1)n^2 \frac{\ncr{n^2-3n+2}{n}}{\ncr{n^2}{n}}. \ee
\end{proof}

\begin{lem}\label{lem:pairsnooverlap} Assume the pairs $(i,j)$ and $(\ell, m)$ are in distinct rows and columns (so $i \neq \ell$ and $j \neq m$). Then \be
    \text{Prob}\left(X_{ij}X_{lm} = 1\right)\ =\ \E[X_{ij}X_{\ell m}]\ =\ \frac{\ncr{(n-2)^2}{n}}{\ncr{n^2}{n}},
 \ee and the number of such pairs is $n^2 (n-1)^2$. \end{lem}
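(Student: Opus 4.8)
The plan is to mirror the structure of Lemma~\ref{lem:pairsoverlap}, separately counting the number of qualifying pairs and computing the shared probability for a single representative pair. The two claims are logically independent, so I would prove them in either order; I will do the counting first since it is purely combinatorial, then the probability.

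\textbf{Counting the pairs.} First I would count the ordered pairs $\big((i,j),(\ell,m)\big)$ with $i\neq\ell$ and $j\neq m$. The natural approach is to choose the two rows and the two columns independently. There are $\ncr{n}{2}$ ways to pick the unordered pair of rows $\{i,\ell\}$ and $\ncr{n}{2}$ ways to pick the unordered pair of columns $\{j,m\}$. Having fixed these, I must assign them to the two points so that the points are distinct and lie in distinct rows and columns: this is the same as choosing which column goes with the smaller-indexed row, giving a factor of $2!=2$ for the two valid pairings (the diagonal versus anti-diagonal placements), and then a further factor of $2$ because the pair of points is itself ordered in \eqref{eq:varcovarexpansion}. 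This yields $\ncr{n}{2}^2 \cdot 2 \cdot 2 = \big(n(n-1)/2\big)^2 \cdot 4 = n^2(n-1)^2$, matching the claim. As a sanity check I would note that the total number of ordered pairs of distinct squares is $n^2(n^2-1)$, and subtracting the $2(n-1)n^2$ same-row-or-same-column pairs from Lemma~\ref{lem:pairsoverlap} should leave exactly $n^2(n-1)^2$; verifying $n^2(n^2-1) - 2(n-1)n^2 = n^2(n-1)(n+1) - 2(n-1)n^2 = n^2(n-1)(n-1) = n^2(n-1)^2$ confirms the count.

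\textbf{Computing the probability.} For a single pair with $i\neq\ell$ and $j\neq m$, the event $X_{ij}X_{\ell m}=1$ requires that no rook lie in rows $i$ or $\ell$ and no rook lie in columns $j$ or $m$. The rooks must therefore be confined to the subboard obtained by deleting those two rows and two columns, which has $(n-2)$ available rows and $(n-2)$ available columns, hence $(n-2)^2$ squares. Since each configuration places $n$ rooks uniformly among the $\ncr{n^2}{n}$ choices of $n$ squares, the probability equals $\ncr{(n-2)^2}{n}/\ncr{n^2}{n}$, as claimed. The only subtlety compared with the overlapping case is that here two full rows and two full columns are removed rather than one row and two columns, so the count of forbidden squares is $2n + 2n - 4 = 4n-4$ (subtracting $4$ for the double-counted intersection squares), leaving $n^2 - (4n-4) = (n-2)^2$; I would make this inclusion–exclusion step explicit.

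\textbf{Anticipated obstacle.} Neither step is deep, but the main place to be careful is the pair-counting, specifically keeping straight the interacting factors of $2$: one from assigning the chosen columns to the chosen rows without creating a shared row or column, and one from the ordering of the pair implicit in the double sum of \eqref{eq:varcovarexpansion}. The complementary-counting check against Lemma~\ref{lem:pairsoverlap} is the cleanest way to guard against an off-by-a-factor error, so I would include it explicitly rather than relying on the direct count alone.
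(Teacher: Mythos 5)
Your proposal is correct and follows essentially the same approach as the paper: count the ordered pairs of squares in distinct rows and columns, then observe that the safety event confines all $n$ rooks to the $(n-2)^2$ squares remaining after deleting rows $i,\ell$ and columns $j,m$, giving probability $\ncr{(n-2)^2}{n}/\ncr{n^2}{n}$. The paper counts pairs directly as $n^2\cdot(n-1)^2$ rather than via $\ncr{n}{2}^2\cdot 4$, and your complementary-counting check is exactly the paper's own consistency remark, so the differences are cosmetic.
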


\begin{proof} There are $n^2$ ways to choose $(i,j)$, and then $(n - 1)^2$ choices for $(\ell, m)$ so that it is not in the same row or column as $(i,j)$. What is the probability that $X_{ij} X_{\ell m}(\mathcal{B})$ equals 1 for such a pair? We need to place $n$ rooks, none can be in row $i$ or $\ell$, and none can be in column $j$ or $m$. We thus have $(n-2) \cdot (n-2)$ squares on the board where we may place the rooks, and the probability all $n$ rooks are in these squares is $\ncr{(n-2)(n-2)}{n} / \ncr{n^2}{n}$. Thus \be \E[X_{ij} X_{\ell m}] \ = \ \frac{\ncr{(n-2)(n-2)}{n}}{\ncr{n^2}{n}}, \ee as claimed, and the contribution from all these pairs is \be n^2(n-1)^2 \frac{\ncr{(n-2)(n-2)}{n}}{\ncr{n^2}{n}} \ = \ n^2(n-1)^2 \frac{\ncr{n^2-4n+3}{n}}{\ncr{n^2}{n}}. \ee
\end{proof}

\begin{rek}
As a consistency check, note the number of pairs from Lemmas \ref{lem:pairsoverlap} and \ref{lem:pairsnooverlap} is \be 2(n-1)n^2 + n^2 (n-1)^2 \ = \ n^2\left((2n-2) + (n^2 - 2n + 1)\right) \ = \ n^2 (n^2 - 1); \ee as expected, this equals the number of distinct pairs $(i,j)$ and $(\ell, m)$.
\end{rek}

We can now determine the contribution of the covariance terms in \eqref{eq:varcovarexpansion}:
\begin{align}
    \sum_{i,j, \ell, m = 1 \atop (i,j) \neq (\ell,m)}^n \text{Cov}\left(X_{ij}, X_{\ell m}\right) &\ = \   \sum_{i,j, \ell, m = 1 \atop (i,j) \neq (\ell,m)}^n \left(\E[X_{ij}\E_{lm}] - \mu_n^2\right) \nonumber\\
    &\ = \  2(n-1)n^2 \frac{\ncr{(n-1)(n-2)}{n}}{\ncr{n^2}{n}} + n^2(n-1)^2\frac{\ncr{(n-2)^2}{n}}{\ncr{n^2}{n}} - n^2  \cdot  (n^2 - 1)\mu_n^2.
\end{align}

\subsection{Variance of $S_n$}\label{sec:variance}

Using the results from the previous subsections, we can write $\text{Var}(S_n)$ as
\begin{align}
    \text{Var}(S_n) &\ = \  \sum_{i = 1}^n\sum_{j=1}^n\text{Var}(X_{ij}) + \sum_{i=1}^n\sum_{j=1}^n \sum_{l=1}^n\sum_{m=1, i,j \neq l,m}^n\text{Cov}(X_{ij}, X_{lm}) \nonumber\\
    &\ = \  n^2(\mu_n - \mu_n^2) + 2(n-1)n^2 \frac{\ncr{(n-1)(n-2)}{n}}{\ncr{n^2}{n}} + n^2(n-1)^2\frac{\ncr{(n-2)^2}{n}}{\ncr{n^2}{n}} - n^2  \cdot  (n^2 - 1)\mu_n^2  \nonumber\\
    &\ = \  n^2(\mu_n - \mu_n^2) + (2n^3 - 2n^2) \frac{\ncr{(n-1)(n-2)}{n}}{\ncr{n^2}{n}} + (n^4-2n^3 + n^2)\frac{\ncr{(n-2)^2}{n}}{\ncr{n^2}{n}} - (n^4 - n^2)\mu_n^2
\end{align}

As ${\rm Var}(S_n/n^2) = {\rm Var}(S_n)/n^4$, if we divide both sides by $n^4$ we obtain
\begin{eqnarray}
 & & \text{Var}(S_n/n^2)\nonumber\\
&    & = \  \frac{\mu_n - \mu_n^2}{n^2} + \left(\frac{2}{n} - \frac{2}{n^2}\right) \frac{\ncr{n^2 - 3n + 2}{n}}{\ncr{n^2}{n}} + \left(1-\frac{2}{n} + \frac{1}{n^2}\right)\frac{\ncr{n^2-4n+4}{n}}{\ncr{n^2}{n}} - \left(1 - \frac{1}{n^2}\right)\mu_n^2. \nonumber\\
\end{eqnarray}

We do not need to determine the variance perfectly, only bound it well enough so we can apply Chebyshev's inequality. We use Lemma \ref{lem:keybinomialratio} to evaluate the two ratios of Binomial coefficients; in the first we take $a=3$ and $b=-2$, for a limit of the ratio of $1/e^3$; for the second term we have $a=4, b=-4$ and get $1/e^4$. The product $\mu_n^2$ converges to $1/e^2$.

Substituting, we see that the constant term (in $n$) above is zero due to cancelation, and the coefficient of the $1/n$ term, as $n\to\infty$, converges to $2/e^3 - 2/e^4$. As the remaining terms are of size $1/n^2$, we see that for large $n$ \be {\rm Var}(S_n/n^2) \ \approx \ \frac{2}{e^3} \left(1 - \frac1{e}\right) \frac{1}{n}; \ee in particular, for all $n$ sufficiently large we have \be \sigma_n^2 \ := \ {\rm Var}(S_n/n^2) \ \le \ \frac{2(e-1)}{e^3} \frac1{n} \ := \ \frac{C}{n}. \ee

By Chebyshev's inequality, \be {\rm Prob}\left(\left|\frac{S_n}{n^2} - \mu_n\right| \ge k \sigma_n\right) \ \le \ \frac1{k^2}. \ee If we take $k = \log n$ then the probability above converges to zero, and $k \sigma_n \le C \log n / n$ also converges to zero. This means for any fixed $\epsilon' > 0$ that as $n\to\infty$ the probability of $S_n/n^2$ being more than $\epsilon'$ from $\mu_n$ tends to zero; as $\mu_n \to \mu = 1/e^2$, we see that in the limit for almost all boards the percentage of safe squares can be made to be as close to $1/e^2$ as we wish.

\section{Future Work}

The result from \S\ref{sec:variance} is an example of the power of Chebyshev's inequality. Simply by bounding the variance we are able to show that $S_n/n^2$ concentrates on the mean; unfortunately it is not strong enough to say anything about the fluctuations about the mean.

We again turn to simulation, to examine the nature of these fluctuations about the mean.  Simulations suggest that the distribution of free squares is asymptotically Gaussian (Figure \ref{fig:fluctuations}), however we do not attempt to prove this result.

\begin{figure}[h]
\centering
\includegraphics[scale=0.8]{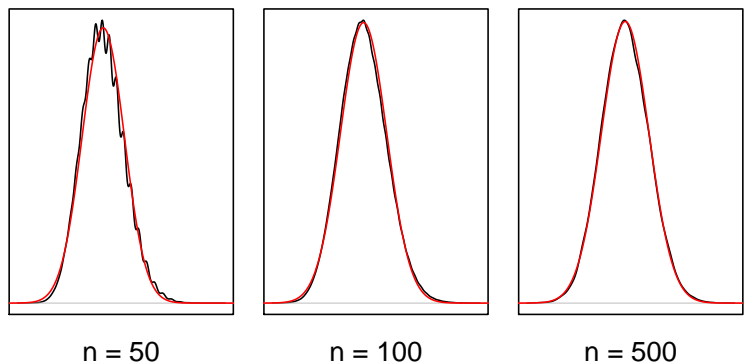}
\caption{Distribution of the percentage of free squares for various board sizes $n$.  Empirical distribution from 10,000 simulations (black), Gaussian density of same mean and variance (red).}
\label{fig:fluctuations}
\end{figure}

There are many other problems one could investigate. For example, what if instead of queens and rooks one were to use bishops? Is it possible to obtain similar results, and if so what combinatorial estimates are now needed?


\ \\

\end{document}